\numberwithin{equation}{section}
\def\swappedhead#1#2#3{%
  \thmnumber{\@upn{\the\thm@headfont #2\@ifnotempty{#1}{.~}}}%
  \thmname{#1}%
  \thmnote{ {\the\thm@notefont(#3)}}}
\theoremstyle{plain}
\newtheorem{thm}{Theorem}[section]%
\newtheorem{prop}[thm]{Proposition}%
\newtheorem{lem}[thm]{Lemma}%
\newtheorem{cor}[thm]{Corollary}%
\theoremstyle{definition}
\newtheorem{question}[thm]{Question}%
\newtheoremstyle{claimstyle}%
   {}%             space above
   {}%             space below
   {\normalfont}%     body font
   {}%                indent
   {\itshape}%        header font
   {.}%               punctuation
   { }%               space after head
   {\thmnote{#3}}%    typeset note only.
\theoremstyle{claimstyle}
\newtheorem*{varclaim}{}
\newenvironment{remark}[1][Remark]{\begin{varclaim}[#1]}{\end{varclaim}}
\newenvironment{sketch}{\begin{proof}[Sketch of proof]}{\end{proof}}
\newcommand*{\defeq}{\mathrel{\vcenter{\baselineskip0.5ex \lineskiplimit0pt
                     \hbox{\scriptsize.}\hbox{\scriptsize.}}}%
                     =}
\newcommand{\eps}{\varepsilon}
\renewcommand{\theta}{\vartheta}
\renewcommand{\phi}{\varphi}
\newcommand{\C}{{\mathbb{C}}}
\newcommand{\Ch}{\hat{\C}}
\newcommand{\R}{{\mathbb{R}}}
\newcommand{\dist}{\operatorname{dist}}
\newcommand{\UO}{\operatorname{UO}}
\newcommand{\BU}{\operatorname{BU}}
\title{Escaping sets are not sigma-compact}
\begin{document}

\author{Lasse Rempe} 
\address{Dept. of Mathematical Sciences \\
	 University of Liverpool \\
   Liverpool L69 7ZL\\
   UK \\ 
	 \textsc{\newline \indent 
	   \href{https://orcid.org/0000-0001-8032-8580%
	     }{\includegraphics[width=1em,height=1em]{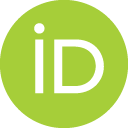} {\normalfont https://orcid.org/0000-0001-8032-8580}}%%
	       }}
%	ORCiD: 0000-0001-8032-8580}
\date{\today}
\email{l.rempe@liverpool.ac.uk}
\subjclass[2010]{Primary 30D05, Secondary 37F10, 54D45.}

\begin{abstract}Let $f$ be a transcendental entire function. The \emph{escaping set} $I(f)$ consists of those points that tend to infinity under iteration of $f$. We show that
  $I(f)$ is not $\sigma$-compact, resolving a question of Rippon from 2009.
  \end{abstract}

\maketitle

\section{Introduction}
 Let $f\colon \C\to\C $ be a transcendental entire function. The set 
    \[ I(f)\defeq \{z\in \C\colon \lim_{n\to\infty} f^n(z) = \infty\} \]
  is called the \emph{escaping set} of $f$, where 
     \[ f^n = \underbrace{f\circ\dots\circ f}_{n\text{ times}}\] 
    denotes the $n$-th iterate of $f$. The escaping set  was first studied by Eremenko~\cite{alexescaping} and has been the subject of intensive research in recent years;
     see e.g.~\cite{oberwolfach,rrrs,bergweilermeasure,ripponstallarderemenkopoints} and their references. The topological study of $I(f)$ turns out to be surprisingly
     intricate. For example Eremenko \cite[p.~343]{alexescaping} asked whether, for every 
     transcendental entire function, every
     connected component of $I(f)$ is unbounded. This question has become known as \emph{Eremenko's conjecture}, and is one of the most famous open problems
     in transcendental dynamics. Strengthened versions of the conjecture have since been proved false in general; compare~\cite[Theorem~1.1]{rrrs} 
     and~\cite[Theorem~1.6]{arclike}. 
     On the other hand, Rippon and Stallard~\cite{boundariesescaping} 
     have shown that $I(f)\cup\{\infty\}$ is always connected, so any counterexample to Eremenko's original conjecture would have rather
     subtle topological properties.
     
   The \emph{fast escaping set} is a certain subset of the escaping set, first introduced by Bergweiler and Hinkkanen~\cite{bergweilerhinkkanen}. This set has also received considerable attention recently; in part because it 
    appears to be more tractable than the full escaping set 
      $I(f)$. In particular, the analogue of Eremenko's conjecture holds for $A(f)$ \cite{ripponstallardfatoueremenko}. One difference between the two
      objects lies in the topological complexity of their definitions. Indeed, $A(f)$ can be written as a countable union of closed sets~\cite[Formula~(1.3)]{ripponstallardfast}; 
       so it is an $F_{\sigma}$ set. On the other hand, by definition 
      \begin{align*} I(f) &= \{z\in\C\colon \forall M\geq 0\, \exists N\geq 0 \colon \lvert f^n(z)\rvert \geq M \text{ for $n\geq N$}\} \\ 
            &=\bigcap_{M= 0}^{\infty} \bigcup_{N=0}^{\infty}  \bigcap_{n=N}^{\infty} f^{-n}(\C\setminus D(0,M)), \end{align*}
    where $D(0,M)$ denotes the disc of radius $M$ around $0$.  
      So $I(f)$ is $F_{\sigma \delta}$: a countable intersection of countable unions of closed sets.       
     This raises the following question, posed by Rippon in 2009~\cite[Problem~8, p.~2960]{oberwolfach}.
     
     \begin{question}\label{qu:Fsigma}
       Is there a transcendental entire function $f$ for which $I(f)$ itself is $F_{\sigma}$? 
     \end{question}
         Since any closed subset of $\C$ is a countable union of compact sets, it is equivalent to ask whether 
         $I(f)$ is ever \emph{$\sigma$-compact} (a countable union of compact sets); compare also \cite{lipham}. It is well-known that 
        $I(f)$ cannot be a $G_{\delta}$ (a countable intersection of
        open sets); see Lemma~\ref{lemma:Gdelta} below. 
      
   As far as we are aware, there is no function $f$ for which Question~\ref{qu:Fsigma} has been previously resolved; the goal of this paper is to
     give a negative answer in general. In fact, we prove a stronger result, as follows.
     Let $\UO(f)$ consist of all $z\in\C$ whose orbit $\{f^n(z)\colon n\geq 0\}$ is unbounded, 
     and let $\BU(f) = \UO(f)\setminus I(f)$ denote the ``bungee set''; see \cite{bungee}. Recall that the \emph{Julia set} $J(f)$ is the set of non-normality
     of the iterates of~$f$. 
     \begin{thm}\label{thm:main}
       Let $f$ be a transcendental entire function. 
        Then every $\sigma$-compact subset of $\UO(f)$ omits some point of $I(f)\cap J(f)$ and some point of $\BU(f)\cap J(f)$. 
                 
      In particular, the sets $I(f)$, $\UO(f)$, $\BU(f)$ and their intersections with $J(f)$ are not $\sigma$-compact. 
     \end{thm}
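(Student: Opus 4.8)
The plan is to prove the first assertion of Theorem~\ref{thm:main}; the ``in particular'' then follows at once, since if any of $I(f)$, $\UO(f)$, $\BU(f)$ or their intersections with $J(f)$ were $\sigma$-compact it would be a $\sigma$-compact subset $E\subseteq\UO(f)$ with $I(f)\cap J(f)\subseteq E$ or $\BU(f)\cap J(f)\subseteq E$, contradicting that assertion. To prove the assertion it suffices to construct sets $C_I\subseteq I(f)\cap J(f)$ and $C_B\subseteq\BU(f)\cap J(f)$, each homeomorphic to the Baire space $\mathbb{N}^{\mathbb{N}}$ and each \emph{closed in $\UO(f)$}. Granting this: for $\sigma$-compact $E=\bigcup_m K_m\subseteq\UO(f)$ with $K_m$ compact, each $C_I\cap K_m$ is closed in $K_m$ (because $C_I$ is closed in $\UO(f)\supseteq K_m$) and hence compact, so $C_I\cap E$ is $\sigma$-compact; as $\mathbb{N}^{\mathbb{N}}$ is not, $C_I\not\subseteq E$, and a point of $C_I\setminus E$ is a point of $I(f)\cap J(f)$ missed by $E$. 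Likewise for $C_B$.

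Both sets arise from one Cantor-scheme construction indexed by itineraries in $\mathbb{N}^{\mathbb{N}}$. As repelling periodic orbits are dense in $J(f)$ and $J(f)$ is unbounded, fix repelling periodic orbits $\mathcal O_1,\mathcal O_2,\dots$ with $\max_{z\in\mathcal O_k}|z|\to\infty$, choosing in addition $\min_{z\in\mathcal O_k}|z|$ bounded for $C_B$ and $\min_{z\in\mathcal O_k}|z|\to\infty$ for $C_I$. To each $t=(t_0,\dots,t_{n-1})\in\mathbb{N}^{<\mathbb{N}}$ I would attach a non-empty open set $U_t$ meeting $J(f)$, with $\overline{U_{t^\frown i}}\subseteq U_t$, the $U_{t^\frown i}$ pairwise disjoint, and $\operatorname{diam}U_t\to0$ along branches, so that every $z\in U_t\cap J(f)$ has an orbit which shadows $\mathcal O_1$ for $t_0$ periods, is then carried by a bounded number of iterates near $\mathcal O_2$, shadows $\mathcal O_2$ for $t_1$ periods, and so on through $n$ blocks. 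Inside a block one pulls back along the contracting local inverse branch of the repelling cycle (Koenigs), which forces $\operatorname{diam}U_t\to0$; the passage between consecutive blocks is supplied by the blow-up property of $J(f)$. Then $\phi(s)\defeq\bigcap_n U_{s|n}$ defines, by the standard Cantor-scheme argument, a homeomorphism of $\mathbb{N}^{\mathbb{N}}$ onto a set $C\subseteq J(f)$; since the orbit of $\phi(s)$ comes within $o(1)$ of $\mathcal O_k$ for every $k$, one gets $C\subseteq I(f)$ when $\min_{z\in\mathcal O_k}|z|\to\infty$ and $C\subseteq\BU(f)$ when these minima stay bounded ($\limsup=\infty$, $\liminf<\infty$).

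That $C$ is closed in $\UO(f)$ is the point at which the repelling cycles are used. Suppose $\phi(s^{(k)})\to w$ with $(s^{(k)})$ not convergent in $\mathbb{N}^{\mathbb{N}}$; passing to a subsequence, there is a node $t$ of length $n$ with $s^{(k)}|n=t$ for all $k$ and $s^{(k)}_n\to\infty$. As $s^{(k)}_n\to\infty$ the sets $U_{t^\frown s^{(k)}_n}$ shrink to the unique point of $U_t$ realising the itinerary $t$ and then shadowing $\mathcal O_{n+1}$ forever; since $\mathcal O_{n+1}$ is repelling, the only orbit remaining near it is the cycle itself, so this point is a pre-image of $\mathcal O_{n+1}$ and has bounded orbit. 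Hence $w\notin\UO(f)$. So a limit point of $C$ inside $\UO(f)$ must come from a convergent $(s^{(k)})$, and therefore lies in $C$.

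I expect the real work to be twofold. First, for $C_I$ one needs that $f$ has repelling periodic orbits of arbitrarily large \emph{minimal} modulus; the natural route is to note that $\{z\in J(f):|f^n(z)|\ge L\text{ for all }n\ge0\}$ is non-empty for every $L$ (it contains $A_R(f)\cap J(f)$ for suitable $R$), to push a small disc around such a point forward by a high iterate until it covers itself (blow-up property), and to pull out a repelling cycle by a Schwarz-lemma fixed-point argument, verifying its orbit stays high. Second, the inter-block transitions must be organised so as to respect the nesting and the diameter control, to introduce no cluster points of $C$ beyond the pre-images of the $\mathcal O_k$, and --- crucially for $C_I$ --- to keep the modulus from dipping during a transition, since one recurrent dip would make $\phi(s)$ a bungee rather than an escaping point. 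I would isolate both in a single lemma: given a disc $D$ meeting $J(f)$ with the itinerary fixed up to some time, a repelling cycle $\mathcal O$, and an integer $m$, there are infinitely many pairwise disjoint sub-discs of $D$ meeting $J(f)$, indexed by the number $j\ge m$ of periods for which their points shadow $\mathcal O$ (with orbits kept above a prescribed level, in the $C_I$ case), whose union clusters only at one pre-image of $\mathcal O$; applying this lemma inductively down the tree produces $\phi$. The hardest single step is the existence of the high repelling orbits required for $C_I$.
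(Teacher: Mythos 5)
Your reduction to constructing sets $C_I\subset I(f)\cap J(f)$ and $C_B\subset \BU(f)\cap J(f)$ that are homeomorphic to $\mathbb{N}^{\mathbb{N}}$ and closed in $\UO(f)$ is logically sound (a closed subspace of a $\sigma$-compact subset of $\UO(f)$ is $\sigma$-compact, and Baire space is not), and it is a genuinely different strategy from the paper's. But the construction itself has gaps that I do not think you can close along the lines you sketch. The critical one is the very ingredient you flag: for $C_I$ you need repelling periodic orbits of arbitrarily large \emph{minimal} modulus, i.e.\ repelling cycles contained entirely in $\C\setminus D(0,L)$ for every $L$. This is not a known fact for general transcendental entire functions, and your proposed route to it fails: if you take a small disc $D$ around a point of $A_R(f)\cap J(f)$, continuity only keeps the first $N$ images of $D$ above level $L$ for some fixed $N$ depending on $D$, whereas the iterate $n$ needed for $f^n(D)$ to cover $D$ (blow-up property, or an Ahlfors island argument to get a univalent branch for the Schwarz step) is not under your control and will in general exceed $N$; the intermediate images $f^k(D)$, $N<k<n$, and hence the intermediate points of the extracted cycle, can pass arbitrarily close to the origin. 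The same uncontrolled-intermediate-orbit problem afflicts your inter-block transitions: the blow-up property supplies \emph{some} iterate carrying a neighbourhood near $\mathcal O_{k+1}$, but says nothing about where the orbit goes in the meantime, so you cannot ``keep the modulus from dipping during a transition''. Since a single recurrent dip already destroys $C_I\subset I(f)$ (as you note), this is not a technicality but the entire difficulty of producing slowly escaping points. A secondary, also unproved, point is the closedness of $C$ in $\UO(f)$: you need the union $\bigcup_{i\ge m}U_{t^\frown i}$ to cluster only at preimages of the cycles, which again requires the transition maps to be organised with a precision your lemma only postulates.

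For comparison, the paper sidesteps all of this by quoting the Rippon--Stallard annular-itinerary machinery (Theorem~\ref{thm:slow}), which is precisely the tool that controls orbits staying in prescribed far-out annuli, and then arguing by a short diagonalisation: for a $\sigma$-compact $X=\bigcup_j K_j\subset\UO(f)$, the first-exit-time function $n_{D_j}$ from the disc $D_j=\overline{D(0,M_0+j)}$ is upper semicontinuous, hence bounded on the compact set $K_j$; assembling these bounds into a slowly growing sequence $(a_n)$ and invoking Theorem~\ref{thm:slow} produces points of $I(f)\cap J(f)$ and $\BU(f)\cap J(f)$ that stay inside $D_j$ too long to lie in any $K_j$. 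If you want to pursue your embedding strategy, you would in effect have to redo (or cite) that annular-itinerary analysis to control the transitions and to replace the repelling cycles by the admissible-itinerary points it provides; as it stands, the existence of the high repelling orbits and the dip-free transitions are assumed rather than proved, so the argument is incomplete.
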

          
     \subsection*{Acknowledgements} I thank David Lipham and Phil Rippon for interesting discussions. I also thank David Mart\'i-Pete, James Waterman and the referee for 
          helpful comments that improved
  		the presentation of the paper.

\section{Proof of the Theorem}
  We require a result on the existence of arbitrarily slowly escaping points, 
   which follows from work of Rippon and Stallard~\cite{ripponstallardannular}; see also~\cite[Theorem~1]{ripponstallardslow}. 
   \begin{thm}\label{thm:slow}
      Let $f$ be a transcendental entire function, and let $R_0\geq 0$. Then there exists $M_0>R_0$ 
       with the following property. 
        If $(a_m)_{m=0}^{\infty}$ is a sequence with $a_m\to\infty$ and $a_m\geq M_0$ for all $m$, 
        then there are $\zeta \in J(f)\cap I(f)$ and $\omega\in J(f)\cap \BU(f)$ with
              \[ R_0 \leq \lvert f^m(\zeta)\rvert \leq a_m\quad\text{and}\quad R_0 \leq \lvert f^m(\omega)\rvert \leq a_m \quad\text{for $m\geq 0$}. \]
   \end{thm}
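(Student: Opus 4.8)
The plan is to derive the theorem from the annular‑itinerary machinery of Rippon and Stallard~\cite{ripponstallardannular,ripponstallardslow} by producing itineraries tailored to the prescribed gauge $(a_m)$. Given $f$ and $R_0$, I would fix a rapidly increasing sequence $R_0<R_1<R_2<\cdots\to\infty$, with $R_{n+1}$ chosen large in terms of $R_n$ — at least $R_{n+1}>\max_{\lvert z\rvert=R_n}\lvert f(z)\rvert$, plus any further margin the cited construction requires — and set $A_0\defeq\{z\colon R_0\le\lvert z\rvert\le R_1\}$, $A_n\defeq\{z\colon R_n\le\lvert z\rvert\le R_{n+1}\}$ for $n\ge1$, and $M_0\defeq R_1>R_0$. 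The input I would quote from~\cite{ripponstallardannular} is that, for the radii chosen suitably, every \emph{admissible} itinerary $(s_n)_{n\ge0}$ of non‑negative integers — admissible meaning essentially that $s_{n+1}\le s_n+1$ for all $n$, so the orbit may climb at most one annulus per step but drop arbitrarily far — is realised by a point $z\in J(f)$, in the sense that $f^n(z)\in A_{s_n}$ for all $n$. For such $z$ one has $R_0\le R_{s_n}\le\lvert f^n(z)\rvert\le R_{s_n+1}$ for every $n$; moreover, since $(R_n)$ increases to infinity, $z\in I(f)$ exactly when $s_n\to\infty$, and $z\in\BU(f)$ exactly when $\limsup s_n=\infty$ while $\liminf s_n<\infty$.

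For the slowly escaping point, given a gauge $(a_m)$ with $a_m\ge M_0$ and $a_m\to\infty$, I would set $\kappa_n\defeq\max\{k\ge0\colon R_{k+1}\le a_n\}$, which is well defined since $R_1\le a_n$ and satisfies $\kappa_n\to\infty$ since $a_n\to\infty$. Then define $s_0\defeq0$ and $s_n\defeq\min(\kappa_n,\,s_{n-1}+1)$ for $n\ge1$. This itinerary is admissible, satisfies $0\le s_n\le\kappa_n$, and has $s_n\to\infty$ (capping the increments at $1$ merely slows the ascent and cannot prevent divergence, as $\kappa_n\to\infty$). Letting $\zeta\in J(f)$ realise it, we obtain $R_0\le\lvert f^n(\zeta)\rvert\le R_{s_n+1}\le R_{\kappa_n+1}\le a_n$ for all $n$, together with $\zeta\in I(f)$; so $\zeta\in J(f)\cap I(f)$ is as required.

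For the bungee point, I would use an oscillating itinerary. Pick levels $\ell_j\uparrow\infty$, and then times $t_1<t_2<\cdots$ with $t_j>t_{j-1}+\ell_{j-1}+1$ and with $a_n\ge R_{\ell_j+1}$ whenever $n\ge t_j$ — possible because $a_n\to\infty$, choosing $\ell_j$ first and then $t_j$ far enough to the right. Let $(s_n)$ equal $0$ outside the intervals $[t_j,\,t_j+\ell_j+1]$; on $[t_j,\,t_j+\ell_j]$ let it climb, $s_{t_j+i}\defeq i$ for $0\le i\le\ell_j$; and set $s_{t_j+\ell_j+1}\defeq0$. This is admissible (increments of $+1$ during each climb, one drop to $0$ afterwards, and flat at level $0$ in between). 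Taking $\omega\in J(f)$ to realise it, we get $R_0\le\lvert f^n(\omega)\rvert\le R_{s_n+1}\le a_n$ for all $n$ (on the $j$‑th climb $s_n\le\ell_j$ and $a_n\ge R_{\ell_j+1}$ there, while at level $0$ one has $\lvert f^n(\omega)\rvert\le R_1\le a_n$), and also $\limsup_n\lvert f^n(\omega)\rvert\ge\sup_j R_{\ell_j}=\infty$ together with $\liminf_n\lvert f^n(\omega)\rvert\le R_1$. Hence $\omega$ has an unbounded orbit but does not escape, so $\omega\in J(f)\cap\BU(f)$ with the required bounds.

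The real content of the theorem lies in the imported annular‑itinerary construction; everything above is just itinerary bookkeeping, and I expect no serious difficulty there. The points that would need care in a full write‑up are the interface with~\cite{ripponstallardannular}: quoting the admissibility condition and the permissible choices of the radii $(R_n)$ in precisely the form proved there, so that $M_0$ depends only on $f$ and $R_0$ while $R_0$ itself remains prescribed; checking that the itinerary may return to the lowest levels infinitely often, as both constructions above require; and confirming that the realising point can be taken in $J(f)$ rather than the Fatou set (note that escaping points can lie in Baker domains, so this is not automatic), which is already built into the cited result. With these granted, the verification is routine.
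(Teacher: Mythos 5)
Your high-level plan — quote the Rippon--Stallard annular-itinerary machinery, then engineer itineraries matching the gauge $(a_m)$ — is the same as the paper's, but the admissibility condition you impute to \cite{ripponstallardannular} is materially wrong, and your two itinerary constructions rely on exactly the freedom that the real result does not grant.

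You assume that any itinerary with $s_{n+1}\le s_n+1$ is realisable, i.e.\ that one may climb at most one annulus per step but \emph{drop or stall arbitrarily at every level}. What \cite{ripponstallardannular} actually gives (as the paper states it) is far more rigid: for each level $n$ there is a set $X_n\subset\{0,\dots,n+1\}$ of permitted next entries, with $X_n=\{n+1\}$ for all $n$ outside a sparse increasing sequence $(n_j)$, and $X_{n_j}=\{0,\dots,n_j+1\}\setminus I_j$ with $\#I_j\le 1$. So at a generic level the orbit is \emph{forced} to climb, and even at a special level $n_j$ one entry — possibly $0$ — may be forbidden. Your slow-escaping itinerary $s_n=\min(\kappa_n,s_{n-1}+1)$ is flat over long stretches at arbitrary levels, which is not admissible; and your bungee itinerary repeatedly drops to level $0$ and sits there, which is doubly problematic (the drop is only allowed from $n_j$, and $0$ may be the excluded entry). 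The paper circumvents both issues by \emph{oscillating} between $n_j$ and an adjacent admissible level $\tilde n_j\in\{n_j-1,n_j\}$ to stall, and by dropping to $n_1$ or $n_1-1$ (never to $0$) for the bungee point, with $n_1\ge 2$ so that one of these is always available. There is also a smaller interface issue: the Rippon--Stallard annuli are tied to the iterates $M^k(R)$ of the maximum modulus, so $M_0$ is taken to be $M^{n_1}(R)$, not an ad hoc $R_1$; if you replace their annuli by arbitrary rapidly increasing radii you can no longer quote their theorem verbatim. You did flag the admissibility condition as needing care, but characterising it as ``itinerary bookkeeping'' with ``no serious difficulty'' understates the gap — the restricted admissibility is precisely what forces the more elaborate itinerary designs in the paper's proof.
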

    \begin{proof}
    The result is an easy consequence of~\cite{ripponstallardannular},
      which studies ``annular itineraries'' of entire functions. 
      Fix some $R>0$ such that $M(r)>r$ for $r\geq R$, where $M(r)$
      denotes the maximum modulus of $f$ on the disc $\overline{D(0,r)}$. 
      Then the \emph{annular itinerary} of a point $z$ is the sequence 
        $(s_m)_{m=0}^{\infty}$ such that
        $s_m=0$ if $\lvert f^m(z)\rvert < R$, and 
        \[ M^{s_m-1}(R) \leq \lvert f^m(z) \rvert < M^{s_m}(R) \] 
        otherwise. (Here $M^k$ is the $k$-th iterate of $r\mapsto M(r)$.)
        
      Theorem 1.2 of~\cite{ripponstallardannular} implies that for suitable $R\geq R_0$, for an entry $s_m=n\geq 0$ in an annular itinerary, 
       there is a subset $X_n\subset \{0,\dots,n+1\}$ of allowable next entries $s_{m+1}$, and that, for infinitely many $n$, all entries but at most one
       are allowable. More precisely,  
           \begin{enumerate}[(a)]
              \item any sequence $(s_m)_{m=0}^{\infty}$ with $s_{m+1}\in X_{s_m}$ for all $m$ is the annular itinerary of some $z\in J(f)$;\label{item:admissible}
              \item $n+1\in X_n$ for all $n$; 
              \item there is an increasing sequence $(n_j)_{j=1}^{\infty}$ such that $\# X_{n_j} \geq n+1$.
            \end{enumerate}
       (With the notation of~\cite{ripponstallardannular}, $X_{n_j}=\{0,\dots,n_j+1\}\setminus I_j$, where $\# I_j \leq 1$, and
           $X_n = \{n+1\}$ when $n\neq n_j$ for all $j$.) We may suppose that the sequence $(n_j)$ is chosen such that $n_1 \geq 2$.

     Set $M_0\defeq M^{n_1}(R)$ and 
      let $(a_m)_{m=0}^{\infty}$ be a sequence as in the statement of the theorem. We may assume that $(a_m)$ is non-decreasing. 
      Similarly as in the proof of \cite[Corollary~1.3~(d)]{ripponstallardannular}, we can construct 
         an annular itinerary $(s_m)_{m=0}^{\infty}$ satisfying~\ref{item:admissible} such that $s_m\to\infty$ and 
         \begin{equation}\label{eqn:slowgrowth} M^{s_m}(R) \leq a_m \qquad\text{for } m\geq 0. \end{equation}
       Indeed, for each $j$, either $n_j\in X_{n_j}$ or $n_{j}-1\in X_{n_j}$; let us denote this element of $X_{n_j}$ by $\tilde{n}_j$. Then any sequence of the form
        \[ (s_m)_{m=0}^{\infty} = \underbrace{n_1, \tilde{n}_1, n_1, \tilde{n}_1,\dots, n_1}_{\text{length }N_1},n_{1}+1, n_1+2, \dots, 
            \underbrace{n_{2}, \tilde{n}_{2}, \dots, n_{2}}_{\text{length }N_2} ,n_{2} + 1, n_{2} + 2, \dots \]
      satisfies~\ref{item:admissible}. If $N_j$ is chosen sufficiently large that 
               \begin{equation}\label{eqn:Nj} a_m \geq M^{n_{j+1}}(R) \quad\text{for } m\geq N_j, \end{equation}
        then~\eqref{eqn:slowgrowth} holds for $m\geq N_1$. It also holds for $m\leq N_1$, since $s_m\leq n_1$ for these values of $m$, and 
     $a_m \geq M_0 = M^{n_1}(R)$ by assumption. 
      
      Let $\zeta \in J(f)$ have annular itinerary $(s_m)_{m=0}^{\infty}$; then $\zeta\in I(f)$ and 
         \[
            R_0\leq R \leq M^{s_m-1}(R) \leq \lvert f^m(\zeta)\rvert  < M^{s_m}(R) \leq a_m \quad \text{for all $m$}. \]
       Hence 
       $\zeta$ has the desired properties.   
   To obtain $\omega$, we instead use an unbounded but non-escaping sequence of the form 
        \[(\tilde{s}_m)_{m=0}^{\infty} = B_1,n_{1}+1, n_1+2, \dots, n_{2}, 
            \hat{n}_2, B_2 , n_1+1,n_1+2,\dots,n_3,\hat{n}_3, B_3,\dots, \]
     where $\hat{n}_j\in X_{n_j}$ is either $n_1$ or $n_1-1$, and $B_j$ is a block alternating between entries 
      $n_1$ and $\tilde{n}_1$. To ensure that the sequence satisfies~\ref{item:admissible}, $B_j$ should end in $n_1$; it should begin
      with $n_1$ if $\hat{n}_j=n_1-1$, and with $\tilde{n}_1$ if $\hat{n}_j = n_1$. The length $N_j$ of the block $B_j$ is again chosen to satisfy~\eqref{eqn:Nj}.
    \end{proof}

\begin{proof}[{Proof of Theorem~\ref{thm:main}}]
 If $D\subset\C$ is closed and $z\in\C$, we set 
    \[ n_D(z) \defeq \min\{n\geq 0\colon f^n(z)\notin D\} \leq \infty, \]
     with the convention that $n_D(z)=\infty$ if no such $n$ exists. Observe that
       $n_D$ depends upper semicontinuously on $z$, as the infimum of
       upper semicontinuous functions. Indeed, 
        \[ n_D(z) = \min\{\chi_n(z) \colon n \geq 0\}, \qquad\text{where}\qquad
            \chi_n(z) = \begin{cases}
                             \infty & \text{if } z\in f^{-n}(D) \\
                             n & \text{if } z\notin f^{-n}(D),\end{cases} \]
      and each $\chi_n$ is upper semicontinuous since $f^{-n}(D)$ is closed. 
      Note also that 
     $z\in \UO(f)$ if and only if $n_D(z)<\infty$ for every compact $D\subset\C$. 

  Now let $R_0 \geq 0$ be arbitrary, and choose $M_0$ as in Theorem~\ref{thm:slow}.
   Let $X\subset \UO(f)$ be $\sigma$-compact; say 
   $X = \bigcup_{j=0}^{\infty} K_j$ where each $K_j$ is compact. 
     Define $M_j \defeq M_0 +j$ and $D_j \defeq \overline{D(0,M_j)}$. Then
     $n_{D_j}(z)<\infty$ for every $z\in K_j$. 
     
     Since $K_j$ is compact and $n_{D_j}$ is
     upper semicontinuous,  
     \[ n_j \defeq \max_{z\in K_j} n_{D_j}(z) \]
     exists for every $j$. Set $N_0\defeq n_0$ and $N_{j+1}\defeq \max\{n_{j+1},N_j+1\}$. 
     For $n\geq 0$, define $j(n)\defeq\min\{j\colon N_j\geq n\}$ and 
      \[ a_n \defeq M_{j(n)} = \min\{ M_j \colon  N_j\geq n \}. \]
     Then $a_n\geq M_0$ for all $n$ and $\lim_{n\to\infty} a_n = \infty$. So by Theorem~\ref{thm:slow}, there are $\zeta\in I(f)\cap J(f)$ and $\omega\in \BU(f)\cap J(f)$ such that
       $\lvert f^n(z)\rvert \leq a_n$ for $z\in \{\zeta,\omega\}$ and all $n\geq 0$.
       
      Let $j\geq 0$. Then, for $n\leq n_j\leq N_j$, we have
        $\lvert f^n(z)\rvert \leq a_n \leq M_j$, and hence $f^n(z) \in D_j$. So $n_{D_j}(z) > n_j$, and $z\notin K_j$ by choice of $n_j$. 
       Thus $z\notin X = \bigcup_{j=0}^{\infty} K_j$, as claimed. 
\end{proof}

By the expanding property of the Julia set, we also obtain the following answer to a question of Lipham (personal communication).

\begin{cor}\label{cor:nowheresigmacompact}
  Let $f$ be a transcendental entire function and let $Y$ be  one of the sets $I(f)\cap J(f)$, $\UO(f)\cap J(f)$ and $\BU(f)\cap J(f)$. 
   Then $Y$ is nowhere $\sigma$-compact. That is, if $X\subset Y$ is $\sigma$-compact, then
    $X$ does not contain a non-empty relatively open subset of $Y$. 
\end{cor}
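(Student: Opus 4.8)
The plan is to argue by contradiction, using the complete invariance of $Y$ together with the density of backward orbits in $J(f)$; this density is the form of the expanding property of the Julia set that is relevant here. Suppose that $X\subseteq Y$ is $\sigma$-compact and contains a non-empty relatively open subset of $Y$, say $\emptyset\neq Y\cap V\subseteq X$ with $V\subseteq\C$ open. Since $X\subseteq Y$, we have $Y\cap V=X\cap V$. The open set $V$ is itself $\sigma$-compact (being the union of the compact sets $\{z\in V:\lvert z\rvert\leq n\text{ and }\dist(z,\C\setminus V)\geq 1/n\}$), so $X\cap V$ — and hence $Y\cap V$ — is $\sigma$-compact; write $Y\cap V=\bigcup_{m}L_m$ with each $L_m$ compact. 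It therefore suffices to deduce that $Y$ is $\sigma$-compact, contradicting Theorem~\ref{thm:main}.

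Each of the sets $I(f)$, $\UO(f)$, $\BU(f)$ and $J(f)$ is completely invariant under $f$ — for $\UO(f)$ because the orbits of $z$ and $f(z)$ differ only in the point $z$, and for $\BU(f)=\UO(f)\setminus I(f)$ as a difference of completely invariant sets — and therefore so is $Y$. Consequently $f^n(Y\cap V)\subseteq Y$ for every $n\geq 0$, so
\[ Z\defeq\bigcup_{n\geq 0}f^n(Y\cap V)=\bigcup_{n\geq 0}\bigcup_{m}f^n(L_m) \]
is a $\sigma$-compact subset of $Y$. On the other hand, since $Y\subseteq J(f)$, the open set $V$ meets $J(f)$; and for any $w\in J(f)$ outside the exceptional set $E(f)$ — which contains at most one point, as $f$ is transcendental entire — the backward orbit $\bigcup_{n\geq 0}f^{-n}(w)$ is dense in $J(f)$ and hence meets $V$. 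Any such preimage of $w$ lies in $Y$ by complete invariance, so $w\in Z$. Thus $Y\setminus E(f)\subseteq Z\subseteq Y$. Since $E(f)$ has at most one point, it follows that $Y$ is $\sigma$-compact (either $Z=Y$, or $Z=Y\setminus\{z_0\}$ for a single point $z_0$, and then $Y=Z\cup\{z_0\}$ is still $\sigma$-compact), which is the desired contradiction.

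The one point that requires care is the appearance of the exceptional set $E(f)$: backward orbits are dense in $J(f)$ only for non-exceptional points, so the argument naturally yields $Y\setminus E(f)$ rather than $Y$ itself, and one must observe that removing or adjoining the (at most one) exceptional point does not affect $\sigma$-compactness. The remaining ingredients — complete invariance of the four dynamically defined sets, the identity $Y\cap V=X\cap V$, and the stability of $\sigma$-compactness under continuous images and countable unions — are all routine.
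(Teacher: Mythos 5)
Your proof is correct, but it takes a genuinely different route from the paper's. Both arguments rest on the same three ingredients — Theorem~\ref{thm:main}, the blowup/expansivity property of $J(f)$, and the complete invariance of $Y$ — but the logical architecture differs. The paper argues directly: it fixes the compact annulus $L_0 = J(f)\cap\overline{D(0,M_0)}\setminus D(0,R_0)$, observes that $f^n(U)\supset L_0$ for some $n$, and then \emph{re-enters the proof} of Theorem~\ref{thm:main} (noting one may take $a_0=M_0$) to conclude that no $\sigma$-compact set in $\UO(f)\cap J(f)$ can contain $L_0\cap Y$; pulling back along $f^n$ gives a point of $U\cap Y$ missed by $X$. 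You instead argue by contradiction and push the relatively open piece \emph{forward}: if $Y\cap V$ were $\sigma$-compact, then $\bigcup_n f^n(Y\cap V)$ would be a $\sigma$-compact subset of $Y$ containing $Y\setminus E(f)$ (via density of backward orbits), hence $Y$ itself would be $\sigma$-compact, contradicting Theorem~\ref{thm:main} as a black box. Your version is more modular — it uses only the \emph{statement} of the main theorem, not its proof — and all the supporting steps you invoke (open subsets of $\C$ are $\sigma$-compact, $\sigma$-compactness is preserved under finite intersections in a Hausdorff space and under continuous images and countable unions, $|E(f)|\leq 1$) are standard and correct. The trade-off is that the paper's direct argument proves something slightly stronger: $X$ need only be a $\sigma$-compact subset of $\UO(f)\cap J(f)$, not of $Y$, whereas your step $Y\cap V=X\cap V$ genuinely requires $X\subset Y$. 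For the corollary as stated, both suffice. One small imprecision: in your penultimate sentence you should take $w\in Y\setminus E(f)$ rather than $w\in J(f)\setminus E(f)$, since you need $w\in Y$ to put its preimages in $Y$ by invariance; the subsequent conclusion $Y\setminus E(f)\subseteq Z$ makes clear this is what you meant.
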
 
\begin{proof}
   We prove a  slightly stronger statement. Let $Y$ be as in the statement and  let  
   $X\subset \UO(f)\cap J(f)$ be $\sigma$-compact. If $U\subset\C$ is open with $U\cap Y \neq \emptyset$, then $X$ omits some point of 
      $U\cap Y$. 
      
    Let $R_0$ and $M_0$ be as in Theorem~\ref{thm:slow}, and set $L_0\defeq J(f)\cap \overline{D(0,M_0)}\setminus D(0,R_0)$. If $R_0$ is chosen sufficiently large, then 
      $L_0$ contains no Fatou exceptional point of $f$. (Recall that a Fatou exceptional point is a point with finite backward orbit, 
      and that $f$ has at most one finite exceptional point \cite[p.~156]{waltersurvey}.) Since $U\cap J(f)\neq \emptyset$, it follows that 
     there is some $n$ such that $f^n(U)\supset L_0$.
     
     The proof of Theorem~\ref{thm:main} shows that no $\sigma$-compact subset of $\UO(f)\cap J(f)$ contains $L_0\cap Y$. (Note that we may take $a_0=M_0$ in the proof
      to ensure that $\lvert \zeta \rvert,\lvert \omega\rvert \leq M_0$.) Moreover, $f^n(X)$ is $\sigma$-compact as the 
	image of a $\sigma$-compact set under a continuous function. So $f^n(X)$ omits some points of $L_0\cap Y$. Since $Y$ is backward-invariant,
	$X$ omits some points of $U\cap Y$, as claimed. 
\end{proof} 

We remark that the proof of Theorem~\ref{thm:main} establishes the following very general principle: 
\begin{prop}[Abstract version of the main theorem]\label{prop:abstract}
  Let $f\colon U\to V$ be continuous, where $U,V$are non-empty topological spaces and $U$ is
  $\sigma$-compact. Let $\UO(f)$ denote the set of $z\in U$ such that $f^n(z)$ is defined and in $U$ for all $n\geq 0$, but
   the orbit $\{f^n(z)\}$ is not contained in any compact subset of $U$.
   
  Suppose that $X\subset \UO(f)$ is $\sigma$-compact and $\Delta\subset U$ is compact. Then there is a sequence 
  $(\Delta_n)_{n=0}^{\infty}$ of compact subsets $\Delta_n\subset U$ with $\Delta\subset \Delta_0\subset \Delta_1\subset \dots$ and 
    $\bigcup_{n=0}^{\infty} \Delta_n = U$ such that $X$ contains no point $\zeta\in \UO(f)$ with
  $f^n(\zeta)\in \Delta_n$ for all $n\geq 0$. 
  \end{prop}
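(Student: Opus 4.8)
The plan is to abstract the proof of Theorem~\ref{thm:main} almost verbatim, replacing the concrete sets $\overline{D(0,M_j)}$ used there by an arbitrary exhaustion of $U$ by compact sets (which exists since $U$ is $\sigma$-compact), and replacing the appeal to Theorem~\ref{thm:slow} by the hypothesis that $\zeta$ with the prescribed orbit containments is to be \emph{excluded} rather than constructed --- so here the conclusion is a statement about \emph{every} such $\zeta$, not the existence of one. Concretely, first I would fix a compact exhaustion $U = \bigcup_{j\ge 0} C_j$ with $C_0\subset C_1\subset\dots$; enlarging $C_0$ we may assume $\Delta\subset C_0$. Write $X = \bigcup_{j\ge 0} K_j$ with each $K_j$ compact.

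The key device is the upper semicontinuous function $n_D$ from the proof of Theorem~\ref{thm:main}: for closed $D\subset U$ set $n_D(z) = \min\{n\ge 0\colon f^n(z)\notin D\}\le\infty$. The same argument shows $n_D$ is upper semicontinuous as the pointwise minimum of the upper semicontinuous functions $\chi_n$ (with $\chi_n(z)=n$ if $z\notin f^{-n}(D)$, $\chi_n(z)=\infty$ otherwise; each $f^{-n}(D)$ is closed by continuity of $f$), and that $z\in\UO(f)$ iff $n_D(z)<\infty$ for every compact $D\subset U$. Hence, since $K_j\subset\UO(f)$, the quantity $n_j \defeq \max_{z\in K_j} n_{C_j}(z)$ is finite for each $j$ (maximum of an upper semicontinuous function on a compact set). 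Now I would define $N_0\defeq n_0$, $N_{j+1}\defeq\max\{n_{j+1},N_j+1\}$ so that $(N_j)$ is strictly increasing, set $j(n)\defeq\min\{j\colon N_j\ge n\}$, and put $\Delta_n\defeq C_{j(n)}$. Since $j(n)$ is non-decreasing and tends to infinity as $n\to\infty$, the sequence $(\Delta_n)$ is non-decreasing with $\Delta_0 = C_0\supset\Delta$ and $\bigcup_n\Delta_n = \bigcup_j C_j = U$, as required.

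It remains to verify the exclusion property, which is exactly the final paragraph of the proof of Theorem~\ref{thm:main}. Suppose $\zeta\in\UO(f)$ satisfies $f^n(\zeta)\in\Delta_n$ for all $n\ge 0$; I claim $\zeta\notin X$. Fix $j\ge 0$; for $n\le n_j\le N_j$ we have $j(n)\le j$, so $\Delta_n = C_{j(n)}\subset C_j$, whence $f^n(\zeta)\in C_j$ for all $n\le n_j$. This means $n_{C_j}(\zeta) > n_j = \max_{z\in K_j} n_{C_j}(z)$, so $\zeta\notin K_j$. As $j$ was arbitrary, $\zeta\notin\bigcup_j K_j = X$. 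That completes the argument.

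The proof is essentially mechanical once one notices that nothing in the proof of Theorem~\ref{thm:main} used the metric structure of $\C$ beyond the existence of a compact exhaustion and the upper semicontinuity of $n_D$ --- so the main ``obstacle'' is purely bookkeeping: one must be careful that $j(n)$ is well-defined (this needs $N_j\to\infty$, guaranteed by $N_{j+1}\ge N_j+1$), that the chain $\Delta_0\subset\Delta_1\subset\dots$ really is non-decreasing (this needs $n\mapsto j(n)$ non-decreasing, which is immediate from the definition), and that $\bigcup_n\Delta_n = U$ rather than some proper subset (this needs $j(n)\to\infty$, again from $N_j\to\infty$, so that every $C_j$ appears). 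No genuinely new idea is needed beyond recognising the right level of abstraction.
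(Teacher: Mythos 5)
Your argument is correct and is exactly the paper's (sketched) proof: replace the discs by an arbitrary compact exhaustion of $U$, reuse the upper semicontinuity of $n_D$ and the definition of $j(n)$ from the proof of Theorem~\ref{thm:main}, and set $\Delta_n = C_{j(n)}$. The bookkeeping you supply (well-definedness and monotonicity of $j(n)$, $j(n)\to\infty$, and the exclusion step) correctly fills in the details the paper leaves to the reader.
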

  \begin{remark}
    Note that we are not assuming any relation between the spaces $U$ and $V$. However, the statement is vacuous when $\UO(f)=\emptyset$, and in
      particular when $U\cap V\neq \emptyset$. 
  \end{remark}
  \begin{sketch}
     Let $(D_j)_{j=0}^{\infty}$ be an increasing sequence of compact subsets $D_j\subset U$ with $\bigcup _{j=0}^{\infty} D_j= U$. 
      Define $j(n)$ as in the proof of Theorem~\ref{thm:main}; then the sets $\Delta_n \defeq D_{j(n)}$ have the desired property. 
  \end{sketch}

  It follows that the set of escaping points is not $\sigma$-compact in any
  setting where an analogue of  Theorem~\ref{thm:slow} holds. 
  This includes:
  \begin{enumerate}[(a)]
    \item transcendental meromorphic functions $\C\to\Ch$ \cite{ripponstallardslow};\label{item:mero}
    \item transcendental self-maps of the punctured plane \cite[Theorem~1.2]{martipeteCstar};\label{item:Cstar}
    \item quasiregular self-maps $f\colon \R^d\to\R^d$ of transcendental type
        \cite{nicksslow}; 
    \item continuous functions $\phi\colon [0,\infty)\to [0,\infty)$ with $\phi(t)\not\to\infty$ as $t\to\infty$,
       and such that $I(\phi)\neq \emptyset$ \cite[Theorem~2.2]{osborneripponstallard}.
  \end{enumerate}

   A more general setting than both~\ref{item:mero} and \ref{item:Cstar} is provided by
     the \emph{Ahlfors islands maps} of Epstein; see e.g.~\cite{exotic}. 
     These are maps $f\colon W\to X$, where $X$ is a compact one-dimensional
     manifold, $W\subset X$ is open, and $f$ satisfies certain transcendence 
     conditions near $\partial W$. We may define the escaping set $I(f)$ 
     as the set of points $z\in W$ with $f^n(z)\in W$ for all $n$ and
     $\dist(f^n(z),\partial W)\to 0$ as $n\to\infty$. It is plausible that an analogue of Theorem~\ref{thm:slow} holds for
     Ahlfors islands maps with $W\neq X$, using a similar proof
    as in~\cite{ripponstallardslow}. This would mean, by Proposition~\ref{prop:abstract}, that $I(f)$ is not 
    $\sigma$-compact for such functions.

 For completeness, we conclude by giving the simple proof that $I(f)$ is never a $G_{\delta}$ set; compare also \cite[Corollary~3.2]{liphametds}. 
\begin{lem}\label{lemma:Gdelta}
 Let $f$ be a transcendental entire function. Then $I(f)$ and $I(f)\cap J(f)$ are not $G_{\delta}$ sets. 
\end{lem}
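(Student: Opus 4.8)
The plan is to run a Baire-category argument on $J(f)$, using the elementary fact that a point whose forward orbit is dense in $J(f)$ cannot escape. I would recall the standard facts that $J(f)$ is non-empty, perfect, completely invariant and closed in $\C$ --- hence, being a closed subspace of a complete metric space, a Baire space --- that $f$ has at most one exceptional point, and that $f$ has the \emph{blow-up property}: for every open $\tilde U\subset\C$ meeting $J(f)$, the set $\C\setminus\bigcup_{n\geq 0}f^{n}(\tilde U)$ consists of at most the exceptional point of $f$. I will also use that $A\defeq I(f)\cap J(f)\ne\emptyset$, which is Eremenko's theorem \cite{alexescaping} (and is also provided by Theorem~\ref{thm:slow}).

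\emph{$A$ is dense in $J(f)$.} Both $I(f)$ and $J(f)$ are completely invariant, hence so is $A$. Pick $z_0\in A$; since $z_0$ escapes it is not a fixed point, so $f(z_0)\in A$ differs from $z_0$, and since $f$ has at most one exceptional point we may, after replacing $z_0$ by $f(z_0)$ if necessary, assume that $z_0$ is not exceptional. Given a non-empty relatively open subset $\tilde U\cap J(f)$ of $J(f)$, the blow-up property gives $z_0=f^{n}(u)$ for some $n\geq 0$ and some $u\in\tilde U$; since $f^{n}(u)=z_0\in J(f)$ and $J(f)$ is backward invariant, $u\in J(f)$, and then $u\in A$ by backward invariance of $A$. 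Thus $A$ meets $\tilde U\cap J(f)$.

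\emph{A comeager set of non-escaping points.} Fix a countable basis $(V_k)_{k=1}^{\infty}$ for the topology of $J(f)$ and let $T\defeq\bigcap_{k}\bigcup_{n\geq 0}f^{-n}(V_k)$ be the set of points of $J(f)$ whose forward orbit is dense in $J(f)$; it is a $G_\delta$ subset of $J(f)$. Each $\bigcup_{n}f^{-n}(V_k)$ is open in $J(f)$, and it is dense there by the blow-up argument just used, since $V_k$, being a non-empty open subset of the perfect set $J(f)$, is not a singleton and therefore contains a non-exceptional point. Hence $T$ is a dense $G_\delta$, and so comeager, in the Baire space $J(f)$. On the other hand $T\cap I(f)=\emptyset$: if $z\in I(f)$ then $f^{n}(z)\to\infty$, so $\{f^{n}(z)\colon n\geq 0\}$ has no finite accumulation point and is thus a closed, countable subset of $\C$ which cannot contain the uncountable set $J(f)$; so the forward orbit of $z$ is not dense in $J(f)$.

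\emph{Conclusion and the delicate point.} If $I(f)$, or merely $I(f)\cap J(f)$, were $G_\delta$ in $\C$, then intersecting a countable family of open subsets of $\C$ with the closed set $J(f)$ would exhibit $A$ as a $G_\delta$ subset of $J(f)$; being also dense, $A$ would be comeager in $J(f)$, and then the comeager sets $A$ and $T$ would intersect, contradicting $A\cap T\subset I(f)\cap T=\emptyset$. The argument is short and I anticipate no real obstacle: the only step requiring attention is the density of $A$, where one must sidestep the possibility that the escaping Julia point first chosen is the (unique) exceptional point of $f$ before invoking the blow-up property; everything else reduces to classical properties of the Julia set together with one application of Baire's theorem.
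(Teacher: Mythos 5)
Your proof is correct and follows essentially the same route as the paper: Baire category in the closed (hence completely metrizable) space $J(f)$, density of $I(f)\cap J(f)$ there, and the fact that the points with dense orbit in $J(f)$ form a dense $G_{\delta}$ disjoint from $I(f)$. The only difference is that you prove directly (via the blow-up property and a countable basis) what the paper delegates to Montel's theorem and the citation of Baker--Dom\'{\i}nguez.
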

\begin{proof}
  Every closed subset of $\C$ (or any metric space) is $G_{\delta}$; so $J(f)$ is $G_{\delta}$. 
   The intersection of two $G_{\delta}$ sets is again $G_{\delta}$, so it is enough to prove the claim for $I(f)\cap J(f)$. 
 By \cite[Theorem~2]{alexescaping}, $I(f)\cap J(f)$ is nonempty, and hence dense in $J(f)$ by Montel's theorem. 
  By Baire's theorem, any two dense $G_{\delta}$ subsets of $J(f)$ must intersect. Hence it is enough
    to observe that $\BU(f)\cap J(f)$ contains a dense $G_{\delta}$ by Montel's theorem, namely the set of points whose orbits are dense in $J(f)$.  
      (See \cite[Lemma~1]{bakerdominguezresidual}.) 
\end{proof}

 Lipham has pointed out the following reformulation of Corollary~\ref{cor:nowheresigmacompact}.
 \begin{cor}
    Any $G_{\delta}$ set 
    $A\subset Y\defeq J(f)\setminus I(f)$ is nowhere dense in $Y$. 

    In particular, $Y$ is $G_{\delta \sigma}$ but not \emph{strongly $\sigma$-complete}; that is, it cannot be written as a countable
    union of relatively closed $G_{\delta}$ subsets.
  \end{cor}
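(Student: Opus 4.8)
The plan is to deduce both statements from Corollary~\ref{cor:nowheresigmacompact} together with standard descriptive-set-theoretic facts. First I would show that any $G_\delta$ set $A \subset Y = J(f) \setminus I(f)$ is nowhere dense \emph{in $Y$}. Note that $J(f) \setminus I(f) = \bigl(J(f) \cap \BU(f)\bigr) \cup \bigl(J(f) \setminus \UO(f)\bigr)$, and points outside $\UO(f)$ have bounded orbit; so I would argue that the ``interesting'' part of $Y$ for our purposes is $\BU(f) \cap J(f)$. Suppose for contradiction that $A$ is a $G_\delta$ subset of $Y$ that is somewhere dense in $Y$, i.e. dense in some nonempty relatively open $U \cap Y$. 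A $G_\delta$ subset of a metric space is Polish in its subspace topology, so $A$ itself is a Polish space that is dense in $U \cap Y$. Since $A$ is dense-in-itself or a single point is treated trivially, $A$ is an absolute $G_\delta$; but the key point is that $A$ is $\sigma$-compact. Here I would invoke the dichotomy (Hurewicz / Saint-Raymond) that a Polish space is either $\sigma$-compact or contains a closed copy of the irrationals; but a cleaner route, staying within the paper's toolkit, is: a $G_\delta$ subset of $\C$ that is contained in $\UO(f) \cap J(f)$ and relatively open-dense in $Y$ would, by Baire category, have to be ``large'' — and then I would intersect with the dense $G_\delta$ of points with dense orbit (as in Lemma~\ref{lemma:Gdelta}) to conclude $A$ meets $\BU(f) \cap J(f)$ in a dense $G_\delta$ subset of $U \cap Y$, forcing $A \cap (U \cap Y)$ to be non-$\sigma$-compact whenever it is $\sigma$-compact — contradicting that $A$ is a $G_\delta$ which is, being Polish and...

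Let me restate the intended clean argument. The first assertion is the dual of Corollary~\ref{cor:nowheresigmacompact} under Baire duality: Corollary~\ref{cor:nowheresigmacompact} says every $\sigma$-compact $X \subset Y$ is nowhere dense in $Y$ (when $Y = J(f) \cap I(f)$ etc.), and more precisely that any $\sigma$-compact $X \subset \UO(f) \cap J(f)$ omits a point of $U \cap Y'$ for each of the three sets $Y'$ and each open $U$ meeting $Y'$. Applying this with $Y' = \BU(f) \cap J(f)$: if $A \subset Y$ were a $G_\delta$ dense in some $U \cap Y$, then (since $\BU(f) \cap J(f)$ is dense $G_\delta$ in $J(f)$, hence $A' := A \cap \BU(f) \cap J(f)$ is a dense $G_\delta$ subset of $U \cap Y$, nonempty) $A'$ would be a $G_\delta$, hence Polish; a Polish space is $\sigma$-compact iff it is $K_\sigma$, and if $A'$ were not $\sigma$-compact we get no immediate contradiction — so instead I use the complementary direction: the complement $Y \setminus A$ is then an $F_\sigma$ subset of $Y$ that is nowhere dense in $U \cap Y$, so $A$ is comeager in $U\cap Y$; intersecting $A$ with the comeager set $\BU(f)\cap J(f)$, we find that $U \cap \BU(f) \cap J(f)$ (comeager in $U \cap Y$) is contained, up to a meager set, in a... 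This is going in circles; the honest one-line proof is: \emph{the first assertion is logically equivalent to Corollary~\ref{cor:nowheresigmacompact} by De Morgan}, because ``$A$ is nowhere dense in $Y$'' for a $G_\delta$ set $A$ says exactly that $Y \setminus A$ (an $F_\sigma$ in $Y$, hence a countable union of sets closed in $Y$, hence if also relatively compact-by-compact, $\sigma$-compact pieces) is dense — and one checks this translates to: no $\sigma$-compact subset of $Y$ contains a relatively open set, which is Corollary~\ref{cor:nowheresigmacompact}. I would spell this De Morgan translation out carefully; that is the main content.

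For the ``in particular'': $Y = J(f) \setminus I(f)$ is $G_{\delta\sigma}$ because $J(f)$ is closed (hence $G_\delta$) and $\C \setminus I(f)$ is $G_{\delta\sigma}$ — indeed $I(f)$ is $F_{\sigma\delta}$ by the displayed formula in the introduction, so its complement is $G_{\delta\sigma}$, and the intersection of a $G_\delta$ with a $G_{\delta\sigma}$ is $G_{\delta\sigma}$. For the failure of strong $\sigma$-completeness: suppose $Y = \bigcup_{n} C_n$ with each $C_n$ a relatively closed $G_\delta$ subset of $Y$. Since $Y$ is not itself $\sigma$-compact (it contains $\BU(f) \cap J(f)$, which is not $\sigma$-compact by Theorem~\ref{thm:main}, and a subset of a $\sigma$-compact... wait, $Y$ need not be $\sigma$-compact and that's the point), apply the Baire category theorem to $Y$: but $Y$ is not completely metrizable in general, so I would instead work inside $\BU(f) \cap J(f)$, which \emph{is} completely metrizable (it contains a dense $G_\delta$ of $J(f)$, and... no).

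The hard part — and where I expect the real obstacle — is verifying that $Y$ or the relevant dense subset is a Baire space so that the category argument for strong $\sigma$-completeness goes through. The clean fix: by the first assertion each $C_n$, being a relatively closed $G_\delta$ subset of $Y$, is in particular a $G_\delta$ subset of $Y$, hence nowhere dense in $Y$; so $Y = \bigcup_n C_n$ exhibits $Y$ as meager in itself. Thus it suffices to show $Y$ is non-meager in itself, i.e. a Baire space. And $Y$ \emph{is} a Baire space: it is a $G_\delta$ subset of the Baire space $J(f) \setminus \overline{I(f)}^{\,?}$ — no. Instead: $Y \supset \BU(f)\cap J(f) =: G$, which is comeager in $J(f)$ (it contains the dense $G_\delta$ of dense-orbit points), hence comeager in the closed set $J(f)$, hence $G$ is a dense Baire subspace of $J(f)$ and thus of $\overline{G} = J(f)$; since $Y$ is a subspace of $J(f)$ containing the dense Baire subspace $G$, $Y$ is dense in $J(f)$ and... $Y$ itself need not be Baire just because it contains a dense Baire subspace. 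The actual resolution I would use: each $C_n$ is relatively closed in $Y$, hence $C_n = \overline{C_n} \cap Y$ where $\overline{C_n}$ is closed in $J(f)$; and $C_n$ is a $G_\delta$ in $Y$, hence $C_n = G_n \cap Y$ for some $G_\delta$ set $G_n \subset J(f)$; so $C_n = \overline{C_n} \cap G_n \cap Y$ is a $G_\delta$ subset of $J(f)$ intersected with $Y$, hence the intersection $C_n \cap G$ is a $G_\delta$ subset of the Polish space... I will package this as: each $C_n \cap G$ is nowhere dense in $G$ by applying the first assertion (valid for $Y = \BU(f)\cap J(f)$ too), so $G = \bigcup_n (C_n \cap G)$ is meager in the Baire space $G$ — the desired contradiction. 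This last step, establishing that $G = \BU(f) \cap J(f)$ is a Baire space (e.g. because it contains the dense $G_\delta$ set of dense-orbit points and a space containing a dense completely metrizable subspace is Baire), is the lemma I would need to state and cite, and is the true crux of the argument.
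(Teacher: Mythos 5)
Your proposal does not arrive at a complete proof of the first (and main) assertion, and the ``De Morgan translation'' that you ultimately defer to is not correct as sketched. Two concrete problems. First, if you complement $A$ inside $Y$, the set $Y\setminus A$ is only a \emph{relative} $F_\sigma$ in $Y$ (a countable union of sets closed in $Y$, not closed in $\C$), so it need not be $\sigma$-compact, and Corollary~\ref{cor:nowheresigmacompact} cannot be applied to it; your parenthetical ``hence if also relatively compact-by-compact, $\sigma$-compact pieces'' is exactly where this breaks down. Second, the statement you claim the assertion translates into, namely that no $\sigma$-compact subset of $Y$ contains a relatively open subset of $Y$, is \emph{not} Corollary~\ref{cor:nowheresigmacompact}: that corollary concerns $\sigma$-compact subsets of $\UO(f)\cap J(f)$, whereas $Y=J(f)\setminus I(f)$ contains the points of $J(f)$ with bounded orbit. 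Relatedly, you repeatedly steer the argument toward $\BU(f)\cap J(f)$, but since $Y$ is the complement of $I(f)$ in $J(f)$, the set that complementation naturally produces lies in $I(f)\cap J(f)$. The paper's argument is short: if $\overline{D(\zeta,\eps)}\cap Y\subset A$, then $B\defeq J(f)\cap \overline{D(\zeta,\eps)}\setminus A$ is the complement of a genuine $G_\delta$ inside a compact set, hence an honest $F_\sigma$ (so $\sigma$-compact) subset of $\C$, and it satisfies $D(\zeta,\eps)\cap I(f)\cap J(f)\subset B\subset I(f)\cap J(f)$, contradicting the strengthened statement established in the proof of Corollary~\ref{cor:nowheresigmacompact}. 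That complementation inside a compact ambient set is the missing idea.

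For the second assertion your final plan is essentially workable, but you have misidentified the crux and made it harder than necessary. Given the first assertion, each $A_k$ is a $G_\delta$ set contained in $Y$, hence nowhere dense in $Y$; since $Y$ is dense in $J(f)$, the closure $\overline{A_k}$ is closed and nowhere dense in the \emph{Polish} space $J(f)$, and Baire's theorem applied in $J(f)$ shows that $\bigcup_k \overline{A_k}$ cannot contain the dense $G_\delta$ set of points with dense orbit, which lies in $Y$. Working in $J(f)$ in this way completely sidesteps the question of whether $Y$ or $\BU(f)\cap J(f)$ is a Baire space, which you flag as the ``true crux''; your alternative route via the fact that a space with a dense completely metrizable subspace is Baire would also work, but it is not needed. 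Your $G_{\delta\sigma}$ observation is fine.
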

  \begin{proof}
    Suppose, by contradiction, that $\overline{D(\zeta,\eps)}\cap Y \subset A$ for some $\zeta\in J(f)$ and $\eps>0$. The complement
      of a $G_{\delta}$ set is $F_{\sigma}$, so 
    $B \defeq J(f)\cap \overline{D(\zeta,\eps)} \setminus A$ is an $F_{\sigma}$ set 
     with \[ D(\zeta,\eps)\cap I(f)\cap J(f)\subset B \subset I(f)\cap J(f),\] which contradicts Corollary~\ref{cor:nowheresigmacompact}. 

   The set $Y$ is $G_{\delta \sigma}$ by definition. 
     As mentioned in the proof of Lemma~\ref{lemma:Gdelta}, $Y$ contains a $G_{\delta}$ subset $U$ that is dense in $J(f)$.
     On the other hand, if $(A_k)_{k=0}^{\infty}$ is a sequence of relatively closed $G_{\delta}$ subsets of $Y$, then 
     $\overline{A_k}$ is closed and nowhere dense in $J(f)$. Hence
        \[  \bigcup_{k=0}^{\infty} A_k \subset \bigcup_{k=0}^{\infty} \overline{A_k} \not\supset U \subset Y, \]
      by Baire's theorem, and $Y$ is indeed strongly $\sigma$-complete. 
  \end{proof}

%\bibliographystyle{amsalpha}
%\bibliography{sigma-compactness}
  
% \MRhref is called by the amsart/book/proc definition of \MR.
%\providecommand{\MRhref}[2]{%
%  \href{http://www.ams.org/mathscinet-getitem?mr=#1}{#2}
%}
\providecommand{\href}[2]{#2}

\end{document}